\newcommand{\seqnum}[1]{\href{https://oeis.org/#1}{\rm \underline{#1}}}
\DeclareMathOperator{\re}{re}
\begin{document}

\theoremstyle{definition}
\newtheorem{thm}{Theorem}
\newtheorem{cor}[thm]{Corollary}
\newtheorem{lem}[thm]{Lemma}
\newtheorem{proposition}[thm]{Proposition}
\newtheorem{example}[thm]{Example}

\begin{center}
\vskip 1cm{\Large\bf 
Divisibility properties of Dedekind numbers
}
\vskip 1cm
\large
Bart\l{}omiej Pawelski\\
Institute of Informatics\\
University of Gda\'nsk\\
Poland\\
\href{mailto:bartlomiej.pawelski@ug.edu.pl}{\tt bartlomiej.pawelski@ug.edu.pl} \\
\ \\
Andrzej Szepietowski\\
Institute of Informatics\\
University of Gda\'nsk\\
Poland\\
\href{mailto:andrzej.szepietowski@ug.edu.pl}{\tt andrzej.szepietowski@ug.edu.pl} \\
\end{center}

\vskip .2in

\begin{abstract}
We study some divisibility properties of Dedekind numbers. We show that the ninth Dedekind number is congruent to 6 modulo 210.
\end{abstract}

\section{Introduction}\label{sec:intro}
  
We define $D_n$ to be the set of all monotone Boolean functions of $n$ variables. The cardinality of this set---$d_n$ is known as the $n$--th Dedekind number. Values of $d_n$ are described by the OEIS \textit{(On-Line Encyclopedia of Integer Sequence)} sequence \seqnum{A000372} (see Table \ref{tab:RD}).

In 1990, Wiedemann calculated $d_8$ \cite{Wied1991}. His result was confirmed in 2001 by Fidytek, Mostowski, Somla, and Szepietowski \cite{Fid2001}. The impulse for writing our paper came from the letter from Wiedemann to Sloane \cite{Wiedlet}
 informing about the computation of the eighth Dedekind number, specifically this fragment: ``Unfortunately, I don't see how to test it...''. Wiedemann only knew that $d_8$ is even. Despite its obvious importance, there is a lack of studies on the divisibility of Dedekind numbers. As far as we know, the only paper concerning this question is Yamamoto's paper \cite{yamamoto}, where he shows that if $n$ is even, then $d_n$ is also even. 
 
 Our research aims to fill this lack by proposing new methods to determine the divisibility of Dedekind numbers. As an application of these methods, we compute remainders of $d_9$ divided by one-digit prime numbers, which (we hope) will help to verify the value $d_9$ after its first computation.
 
Our main result is the following system of congruences:

\[ d_9 \equiv 0\;(\bmod\;2), \]
\[d_9 \equiv 0\;(\bmod\;3), \]
\[d_9 \equiv 1\;(\bmod\;5), \]
\[d_9 \equiv 6\;(\bmod\;7). \]

By the Chinese remainder theorem, we have
\[ d_9 \equiv 6\;(\bmod\;210). \]

\section{Preliminaries}

Let $B$ denote the set $\{0,1\}$ and $B^n$ the
set of $n$-element sequences of $B$.
A Boolean function with $n$ variables is any function
from $B^n$ into $B$. There are $2^n$ elements in $B^n$
and $2^{2^n}$ Boolean functions with $n$ variables.
There is the order relation in $B$ (namely: $0\le 0$, $\;0\le1$, $\;1\le1$)
and the partial order in $B^n$:
for any two elements: 
$x=(x_1,\dots,x_n)$, $\;y=(y_1,\dots,y_n)$ in $B^n$,
$$x\le y\quad\hbox{if and only if}\quad x_i\le y_i
\quad \hbox{for all $1\le i\le n$}.$$
A function $h:B^n\to B$ is monotone if
$$x\le y \Rightarrow h(x)\le h(y).$$
Let $D_n$ denote the set of monotone functions with $n$ variables
and let $d_n$ denote $|D_n|$.
We have the partial order in $D_n$ defined by:
$$g\le h\quad\hbox{if and only if}\quad g(x)\le h(x)\quad
\hbox{for all } x\in B^n.$$
We shall represent the elements of $D_n$ as strings of bits of
length $2^n$. Two elements of $D_0$ will be represented as 0 and 1.
Any element $g\in D_1$ can be represented as 
the concatenation $g(0)*g(1)$, where $g(0), g(1)\in D_0$ and $g(0)\le g(1)$. Hence, $D_1=\{00, 01, 11\}$. 
Each element of $g\in D_2$ is the concatenation (string) of four bits: $g(00)*g(10)*g(01)*g(11)$ which can be represented as
a concatenation $g_0*g_1$, where $g_0, g_1\in D_1$ and $g_0\le g_1$.
Hence, $D_2=\{0000, 0001, 0011, 0101, 0111, 1111\}$.
Similarly, any element of $g\in D_n$ can be represented as
a concatenation $g_0*g_1$, where $g_0, g_1\in D_{n-1}$ and $g_0\le g_1$. 
Therefore, we can treat functions in $D_n$ as sequences of bits and as integers. By $\preceq$ we denote the total order in $D_n$ induced by the total order in integers. Additionally, we shall write 
$x\prec y$ if $x\preceq y$ and $x\ne y$.

For a set $Y\subseteq D_n$, by $Y^2$ we denote the Cartesian power $Y^2=Y\times Y$, that is the set of all ordered pairs $(x,y)$ with $x,y\in Y$. Similarly for more 
than two factors, we write $Y^k$ for the set of ordered $k$-tuples of elements of $Y$.
By $\top$ we denote the maximal element in $D_n$, that is $\top=(1\ldots 1)$; and by $\bot$ the minimal element in $D_n$, that is $\bot=(0\ldots 0)$. For two elements $x,y\in D_n$, by $x|y$ we denote the bitwise or; and by $x\&y$ we denote the bitwise and. Furthermore,
$\re(x,y)$ denotes $|\{z\in D_n : x\le z\le y\}|$. Note that $\re(x,\top)=|\{z\in D_n : x\le z\}|$ and $\re(\bot,y)=|\{z\in D_n : z\le y\}|$. 

\subsection{Posets}
A {\it poset (partially ordered set)} $(S,\le)$ consists of a set $S$ together with a binary relation (partial order) $\le$ which is reflexive, transitive, and antisymmetric.
 Given two posets $(S,\le)$ and $(T,\le)$ a function $f:S \to T$ is {\it monotone}, if  $x \le y$ implies $f(x)\le f(y)$.
By $T^S$ we denote the poset of all monotone functions from $S$ to $T$ with the partial order defined by:
$$f\le g\quad\hbox{ if and only if}\quad f(x)\le g(x)\hbox{ for all }x\in S.$$
In this paper we use the following well-known lemma; see \cite{szepietowski}:
\begin{lem}\label{L1} The poset $D_{n+k}$ is isomorphic to the poset $D_n^{B^k}$---the poset of monotone functions from $B^k$ to $D_n$.
\end{lem}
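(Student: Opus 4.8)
The plan is to realize the isomorphism as \emph{currying}. Identify $B^{n+k}$ with the product $B^k\times B^n$ under the componentwise order, so that $(a,x)\le(a',x')$ holds exactly when $a\le a'$ in $B^k$ and $x\le x'$ in $B^n$. For $h\in D_{n+k}$ I would define $\Phi(h)\colon B^k\to D_n$ by setting $\Phi(h)(a)$ to be the section $x\mapsto h(a,x)$. First I would check that $\Phi(h)$ actually lands in $D_n^{B^k}$. For each fixed $a$ the section $x\mapsto h(a,x)$ is monotone, since $x\le x'$ gives $(a,x)\le(a,x')$ and hence $h(a,x)\le h(a,x')$; thus $\Phi(h)(a)\in D_n$. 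Moreover $a\mapsto\Phi(h)(a)$ is itself monotone: if $a\le a'$ then $(a,x)\le(a',x)$ for every $x$, so $h(a,x)\le h(a',x)$, which is precisely $\Phi(h)(a)\le\Phi(h)(a')$ in the pointwise order on $D_n$. Hence $\Phi(h)\in D_n^{B^k}$.

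Next I would construct the inverse. Given $f\in D_n^{B^k}$, define $\Psi(f)\colon B^{n+k}\to B$ by $\Psi(f)(a,x)=f(a)(x)$. The two definitions are manifestly mutually inverse, so it only remains to see that $\Psi(f)\in D_{n+k}$ and that both maps preserve order. That $\Phi$ and $\Psi$ are order-preserving is immediate from the fact that all the orders in sight are defined pointwise: $h\le h'$ in $D_{n+k}$ means $h(a,x)\le h'(a,x)$ for all $(a,x)$, which says exactly $\Phi(h)(a)\le\Phi(h')(a)$ for all $a$, i.e.\ $\Phi(h)\le\Phi(h')$; and symmetrically for $\Psi$.

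The one genuinely substantive point is the monotonicity of $\Psi(f)$, where the two layers of order interact. Given $(a,x)\le(a',x')$ I would chain the inequalities
\[
\Psi(f)(a,x)=f(a)(x)\le f(a')(x)\le f(a')(x')=\Psi(f)(a',x'),
\]
where the first inequality uses that $f$ is monotone (so $f(a)\le f(a')$ in $D_n$, hence $f(a)(x)\le f(a')(x)$) and the second uses that the single element $f(a')\in D_n$ is itself a monotone Boolean function (so $x\le x'$ gives $f(a')(x)\le f(a')(x')$). This is the heart of the argument: it is exactly the place where monotonicity of the outer function $f$ and monotonicity of its values combine to yield monotonicity of the uncurried function on the product poset, and conversely it is why no information is lost passing back and forth. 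Everything else is bookkeeping. One could alternatively obtain the result by induction on $k$, taking as the base case the bit-string decomposition $g=g_0*g_1$ with $g_0\le g_1$ described above, which is precisely the statement $D_{m}\cong D_{m-1}^{B^{1}}$ that a monotone map from $B^{1}=\{0,1\}$ into $D_{m-1}$ is nothing but such a comparable pair.
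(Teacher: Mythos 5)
Your proof is correct and complete: you exhibit the currying bijection $\Phi$ and its inverse $\Psi$, verify that both are order-preserving, and give a careful account of the one substantive step, namely that monotonicity of $f$ in the outer variable and monotonicity of each value $f(a)\in D_n$ together yield monotonicity of the uncurried map on the product poset $B^k\times B^n\cong B^{n+k}$. The paper itself gives no proof of this lemma, stating it as well-known and citing Szepietowski, so there is no internal argument to compare against; your argument is the standard one, and your closing remark about induction on $k$ via the decomposition $g=g_0*g_1$ with $g_0\le g_1$ corresponds exactly to the recursive bit-string description of $D_n$ in the paper's preliminaries, which is precisely the case $k=1$ of the lemma.
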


\subsection{Permutations and equivalence relation}
Let $S_n$ denote the set of permutations on $\{1,\ldots,n\}$.
Every permutation $\pi\in S_n$ defines the permutation on $B^n$ by
$\pi(x)=x\circ\pi$
(we treat each element $x\in B^n$ as a function
$x:\{1,\ldots,n\}\to \{0,1\}$).
Note that $x\le y$ if and only if $\pi(x)\le\pi(y)$.
The permutation $\pi$ also generates the permutation
on $D_n$. Namely, by $\pi(g)=g\circ \pi$.
Note that $\pi(g)$ is monotone if $g$ is monotone.
By $\sim$ we denote an equivalence relation on $D_n$. Namely,
two functions $f, g\in D_n$ are {\it equivalent}, $f\sim g$, if there is a 
permutation $\pi\in S_n$ such that $f=\pi(g)$.
For a function $f\in D_n$ its {\it equivalence class}
is the set $[f]=\{g\in D_n : g \sim f\}$. By $\gamma(f)$ we denote $|[f]|$.
For $m>1$, let $E_{n,m}= \{f\in D_n:\gamma(f)\equiv 0\pmod m\}$
and $E_{n,m}^c=D_n-E_{n,m}$.
For the class $[f]$, we define its {\it canonical representative} as the one element in $[f]$ chosen to represent the class. One of the possible approaches is to choose its minimal
(according to the total order $\preceq$) element \cite{Wied1991}. Sometimes we shall identify the class $[f]$ with its canonical representative and treat $[f]$ as an element in $D_n$.
By $R_n$ we denote the set of equivalence classes and by $r_n$ we denote the number of the equivalence classes; that is $r_n=|R_n|$. Values of $r_n$ are described by \seqnum{A003182} OEIS sequence;
see Table~\ref{tab:RD}.

\begin{table}[ht]
\centering
\begin{tabular}[h]{|l|l|l|}
\hline
 $n$ & $d_n$ & $r_n$\\
\hline
0 & 2 & 2 \\
1 & 3 & 3 \\
2 & 6 & 5 \\
3 & 20 & 10 \\
4 & 168 & 30 \\
5 & 7,581 & 210 \\
6 & 7,828,354 & 16,353 \\
7 & 2,414,682,040,998 & 490,013,148 \\
8 & 56,130,437,228,687,557,907,788 & 1,392,195,548,889,993,358 \\

\hline
\end{tabular}
\caption{Known values of $d_n$ (\seqnum{A000372}) and $r_n$ (\seqnum{A003182})}
\label{tab:RD}
\end{table}

\section{Divisibility of Dedekind numbers by 2}

 In 1952, Yamamoto \cite{yamamoto} proved that if $n$ is even, then $d_n$ is also even. Obviously, the parity of $d_9$ cannot be checked with this property. However, there are other methods to check it. One of the possible approaches is by using the duality property of Boolean functions. 
For each $x\in D_n$, we have {\it dual} $x^d$ which is obtained by reversing and negating
all bits. For example, $1111^d=0000$ and $0001^d=0111$. An element $x\in D_n$ is {\it self-dual} if $x=x^d$. For example, $0101$ and $0011$ are self-duals in $D_2$.
If $x$ is not self-dual, and $y=x^d\ne x$, then $y^d=x$. Thus, non-self-duals form pairs
 of the form $(x,x^d)$, where $x\ne x^d$. Let $k_n$ denote the number of these pairs and let $\lambda_n$ denote the number of self-dual functions in $D_n$.
We have that $d_n=2k_n + \lambda_n$.
Hence, 
$\lambda_n \equiv d_n\pmod2$.
Values of $\lambda_n$ are described by the OEIS sequence \seqnum{A001206};
see Table~\ref{tab:SD}. The last known term of this sequence---$\lambda_9$ was calculated in 2013 by Brouwer et al. \cite{brouwer}.

\begin{table}[ht]
\centering
\begin{tabular}[h]{|l|l|}
\hline
 $n$ & $\lambda_n$\\
\hline
0 & 0 \\
1 & 1 \\
2 & 2 \\
3 & 4 \\
4 & 12 \\
5 & 81 \\
6 & 2,646 \\
7 & 1,422,564 \\
8 & 229,809,982,112 \\
9 & 423,295,099,074,735,261,880 \\

\hline
\end{tabular}
\caption{Known values of $\lambda_n$ (\seqnum{A001206})}
\label{tab:SD}
\end{table}

\begin{cor}\label{corollary1}
$d_9 \equiv \lambda_9 \equiv 0 \pmod2 $
\end{cor}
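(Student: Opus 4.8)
The statement follows almost immediately from the material already assembled, so the plan is simply to isolate the two ingredients. First I would invoke the partition of $D_n$ induced by the duality map $x \mapsto x^d$. The key point to verify is that this map is a well-defined involution on $D_n$: reversing and negating all bits sends a monotone function to a monotone function, and applying the operation twice returns the original element, so $x \mapsto x^d$ has order two. Its fixed points are exactly the self-dual functions, of which there are $\lambda_n$, while every other element lies in a two-element orbit $\{x, x^d\}$ with $x \ne x^d$; counting these orbits as $k_n$ pairs yields the identity $d_n = 2k_n + \lambda_n$ already recorded in the text.

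Reducing this identity modulo $2$ annihilates the $2k_n$ term and gives $d_n \equiv \lambda_n \pmod 2$ for every $n$. Specializing to $n = 9$ produces $d_9 \equiv \lambda_9 \pmod 2$, which is the first congruence in the corollary.

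It then remains to determine the parity of $\lambda_9$. Here I would simply read off the value $\lambda_9 = 423{,}295{,}099{,}074{,}735{,}261{,}880$ from Table~\ref{tab:SD} (computed by Brouwer et al.\ \cite{brouwer}); since its final digit is $0$, it is even, so $\lambda_9 \equiv 0 \pmod 2$ and therefore $d_9 \equiv 0 \pmod 2$.

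The main obstacle is conceptual rather than present in this argument: the only genuinely hard content---the actual enumeration of self-dual monotone functions in nine variables---is not carried out here but quoted from the literature. Granting that computation, the corollary is an immediate consequence of the pairing argument, and the one thing one must be careful about is confirming that duality really is an involution preserving monotonicity, so that the counts $2k_n$ and $\lambda_n$ genuinely partition $d_n$.
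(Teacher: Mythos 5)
Your proposal is correct and follows essentially the same route as the paper: the duality involution $x \mapsto x^d$ pairs non-self-dual functions to give $d_n = 2k_n + \lambda_n$, hence $d_9 \equiv \lambda_9 \pmod 2$, and the parity is then read off from the value of $\lambda_9$ computed by Brouwer et al. Your explicit check that duality is a monotonicity-preserving involution is a small addition the paper leaves implicit, but the argument is otherwise identical.
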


One can directly check that $d_n \equiv \lambda_n\pmod 2$ for $n \le 8$.

\section{Divisibility of Dedekind numbers by 3}\label{sec:Sec4}

By Lemma~\ref{L1}, the poset $D_{n+3}$ is isomorphic to the poset $D_n^{B^3}$---the set of monotone functions from 
$B^3=\{000,001,010,100,110,101,011,111\}$ to $D_n$. 
\begin{def}\label{def1}
Consider the function $H$ which for every triple $(x,y,z)\in D_n^3$ returns the value
$$H(x,y,z)=\re(\bot,x\&y\&z)\cdot\sum_{s\ge x|y|z} \re(x|y,s)\cdot \re(x|z,s)\cdot \re(y|z,s).$$
\end{def}
Observe that $H(x,y,z)$ is equal to the number of monotone functions $f\in D_n^{B^3}$ with
$f(001)=x$, $f(010)=y$ and $f(100)=z$. Indeed, for $f(000)$ we can choose any value $t$ 
satisfying $t\le x\& y\&z$, which can be done in $\re(\bot,x\& y\&z)$ ways. For $f(111)$ we can choose any element in $\{s\in D_n: s\ge x|y|z\}$. Furthermore, after the value $f(111)=s$ is chosen, then the values
$f(011)$, $f(101)$, $f(110)$ can be chosen independently
from each other. We can choose a value for $f(011)$ in $\re(x|y,s)$ ways; a value for $f(101)$ in $\re(x|z,s)$ ways; and a value for $f(110)$ in $\re(y|z,s)$ ways.

\def\drawconnectionsa{\draw (0,0) -- (-2,2);
  \draw (0,0) -- (0,2);
  \draw (0,0) -- (2,2);
  \draw (-2,2) -- (-2,4);
  \draw (-2,2) -- (0,4);
  \draw (0,2) -- (-2,4);
  \draw (0,2) -- (2,4);
  \draw (2,2) -- (0,4);
  \draw (2,2) -- (2,4);
  \draw (-2,4) -- (0,6);
  \draw (0,4) -- (0,6);
  \draw (2,4) -- (0,6);}
  
\def\drawconnectionsb{
  \draw (0,0) -- (0,2);
  \draw (0,2) -- (-2,4);
  \draw (0,2) -- (0,4);
  \draw (0,2) -- (2,4);
  \draw (-2,4) -- (0,6);
  \draw (0,4) -- (0,6);
  \draw (2,4) -- (0,6);}
  
\def\drawconnectionsc{
  \draw (0,0) -- (0,2);
  \draw (0,2) -- (0,4);
  \draw (0,4) -- (0,6);}

\begin{center}
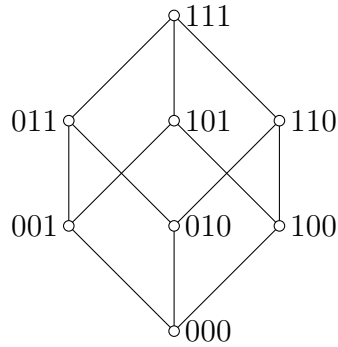

\begin{tikzpicture}[scale=0.7]
  \drawconnectionsa{}
  \draw[black, fill=white] (0,0) circle (.1cm) node[right] {$000$};
  \draw[black, fill=white] (-2,2) circle (.1cm) node[left] {$001$};
  \draw[black, fill=white] (0,2) circle (.1cm) node[right] {$010$};
  \draw[black, fill=white] (2,2) circle (.1cm) node[right] {$100$};
  \draw[black, fill=white] (-2,4) circle (.1cm) node[left] {$011$};
  \draw[black, fill=white] (0,4) circle (.1cm) node[right] {$101$};
  \draw[black, fill=white] (2,4) circle (.1cm) node[right] {$110$};
  \draw[black, fill=white] (0,6) circle (.1cm) node[right] {$111$};
  \end{tikzpicture}
  \captionof{figure}{Poset $B^3$.}
  \label{fig:1P3}
\end{center}

For $A\subseteq D_n^3=D_n\times D_n\times D_n$, let $H(A)$ denote $\sum_{(x,y,z)\in A}H(x,y,z)$.
By Lemma~\ref{L1}, we have that
 $$d_{n+3}=H(D_n^3)=\sum_{x\in D_n}\sum_{y\in D_n}
\sum_{z\in D_n}H(x,y,z).$$

The value of $H$ is invariant under any permutation of its arguments. Let $x,y,z\in D_n$ and suppose that
$x\preceq y\preceq z$. We have the following types of equivalence classes:

\begin{enumerate}
  \item $x=y=z$, with one element in class
  \item $x\prec y=z$, with three elements in class
  \item $x=y\prec z$, with three elements in class
  \item $x\prec y\prec z$, with six elements in class.
\end{enumerate}

A similar property is exploited by Fidytek et al. \cite[Algorithm 3]{Fid2001} to speed up their computation of $d_8$. However, they work with a more complex level-4-algorithm. After applying it to our situation, $d_{n+3}$ can be expressed as the following summation:

\begin{equation} \label{d3-2}
d_{n+3} = \sum_{x\in D_n} H(x,x,x) 
+3 \cdot \sum_{\substack{x,y\in D_n \\ x\prec y}} (H(x,x,y) + H(x,y,y))
+ 6 \cdot \sum_{\substack{x,y,z\in D_n \\ x\prec y\prec z}} H(x,y,z)
\end{equation}

Thus,
$$d_{n+3} \equiv \sum_{x\in D_n} H(x,x,x)\pmod 3.$$
Now we can use one more iteration of symmetry exploitation.

\begin{center}
\begin{tikzpicture}[scale=0.7]
  \drawconnectionsb{}
  \draw[black, fill=white] (0,0) circle (.1cm) node[right] {$a$};
  \draw[black, fill=white] (0,2) circle (.1cm) node[right] {$b$};
  \draw[black, fill=white] (-2,4) circle (.1cm) node[left] {$c$};
  \draw[black, fill=white] (0,4) circle (.1cm) node[right] {$d$};
  \draw[black, fill=white] (2,4) circle (.1cm) node[right] {$e$};
  \draw[black, fill=white] (0,6) circle (.1cm) node[right] {$f$};
  \end{tikzpicture}
  \captionof{figure}{Poset $C$.}
  \label{fig:2P3}
\end{center}
Consider the poset $C$ presented in Figure~\ref{fig:2P3}.
Note that $\sum_{x\in D_n} H(x,x,x)=|D_n^C|$---the number of functions from the poset $C$ to $D_n$.
Let $H'$ denote the function which for each $u,v,w\in D_n$ returns
\begin{equation} \label{H'}
H'(u,v,w) = \Big( \sum_{s \le(u\& v\& w)} re[s, u\& v\& w] \Big) \cdot re[u|v|w, \top].
\end{equation}
Observe that $H'(u,v,w)$ is equal to the number of monotone functions $h\in D_n^{C}$ with
$h(c)=u$, $h(d)=v$ and $h(e)=w$. 
Indeed, for $h(f)$ we can choose any value $t$ 
satisfying $t\ge u|v|w$, which can be done in $\re(u|v|w, \top)$ ways. For $h(a)$ we can choose any element in $\{s\in D_n: s\le u\& v\& w\}$. Furthermore, after the value $h(a)=s$ is chosen, then the value $h(b)$ can be chosen in $\re(s,u\& v\& w)$ ways.
Hence,
\begin{equation} \label{c3-1}
\sum_{x\in D_n} H(x,x,x) = \sum_{u,v,w\in D_n} H'(u,v,w).
\end{equation}
The value of $H'$ is invariant under any permutation of its arguments, so we can use the same symmetry--based simplification as in Equation \ref{d3-2}:

\begin{multline}
\label{c3-2}
 \sum_{u,v,w\in D_n} H'(u,v,w)= \sum_{u\in D_n} H'(u,u,u) \\ + 3\cdot\sum_{\substack{u,v\in D_n \\ u\prec v}} (H'(u,u,v) + H'(u,v,v)) +6\cdot \sum_{\substack{u,v,w \in D_n \\ u\prec v\prec w}} H'(u,v,w) 
\end{multline}
Thus, we have 
$$d_{n+3}\equiv \sum_{u\in D_n} H'(u,u,u)\pmod3$$
Observe that $\sum_{u\in D_n} H'(u,u,u)$ is equal to the number of monotone functions from
the path $P_4$ to $D_n$.
Szepietowski~\cite{szepietowski} shows that monotone functions from the path $P_4=(a<b<c<d)$ to a poset $S$, are connected to the elements of the array 
$M(S)^3=M(S)\times M(S)\times M(S)$, where $M(S)$ is the array of the poset $S$. Namely, for $i,j\in S$, we have $M(S)[i,j]=1$ if $i\le j$, and $M(S)[i,j]=0$ otherwise. Moreover the sum of elements of $M(S)^3$ is equal to $|S^{P_4}|$.
For example, for the poset
$D_1=\{00<01<11\}$, we have
$$
M(D_1)=\left(
\begin{array}{ccc}
1 & 1& 1\\
0 & 1& 1\\
0 & 0&1
\end{array}
\right)
$$
and
$$
M(D_1)^3=\left(
\begin{array}{ccc}
1 & 3& 6\\
0 & 1& 3\\
0 & 0&1
\end{array}
\right)
$$
The sum of the elements of $(M(D_1)^3)$ is equal to 15, which is equal to $|D_1^{P_4}|$---the number of monotone functions from $P_4$ to $D_1$. 

Furthermore, consider the poset $D_2=\{0000,0001,0011,0101,0111,1111\}$ and its array
$$
M(D_2)=\left(
\begin{array}{cc|cc|cc}
1& 1& 1& 1& 1& 1 \\
0& 1& 1& 1& 1& 1 \\
\hline
0& 0& 1& 0& 1& 1 \\
0& 0& 0& 1& 1& 1 \\
\hline
0& 0& 0& 0& 1& 1 \\
0& 0& 0& 0& 0& 1
\end{array}
\right)
$$
Consider now the array 

$$
M(D_2)^3=\left(
\begin{array}{cc|cc|cc}
1& 3& 6& 6& 14& 20 \\
0& 1& 3& 3& 9& 14 \\
\hline
0& 0& 1& 0& 3& 6 \\
0& 0& 0& 1& 3& 6 \\
\hline
0& 0& 0& 0& 1& 3 \\
0& 0& 0& 0& 0& 1
\end{array}
\right)
$$
The sum of the elements of $(M(D_2)^3)$ is equal to 105, which is equal to $|D_2^{P_4}|$---the number of monotone functions from $P_4$ to $D_2$.
In a similar we can compute $|D_n^{P_4}|$ for $n=3, 4, 5 $. Unfortunately, this method cannot be easily applied for $n=6$, because the array $M(D_6)$ is too big. However, Pawelski~\cite{pawelski} proposes another method: $|D_{(n+m)}^{P_4}| = |D_n^{ P_4\times B^m}| = |(D_n^{ P_4})^{B^m}|$ (also see \cite{szepietowski}).
Using the same program as used to compute $|D_5^{P_4}|$ \cite{pawelski} we can calculate $|D_6^{P_4}|$ and the result (see Table \ref{tab:FP3}) is divisible by 3.

\begin{cor}\label{corollary3-1}
As $|D_6^{P_4}| = 868329572680304346696$ is divisible by 3, then also $d_9$ is divisible by 3.
\end{cor}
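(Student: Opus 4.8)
The plan is to invoke the congruence $d_{n+3}\equiv|D_n^{P_4}|\pmod 3$ that has just been assembled in this section, specialize it to $n=6$, and then decide the divisibility of the single resulting integer by an elementary digit-sum test.

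First I would chain together the reductions already carried out above. Equation~\ref{d3-2} gives $d_{n+3}\equiv\sum_{x\in D_n}H(x,x,x)\pmod 3$, since the coefficients $3$ and $6$ attached to the remaining two sums vanish modulo $3$. Equation~\ref{c3-1} rewrites $\sum_{x\in D_n}H(x,x,x)$ as $\sum_{u,v,w\in D_n}H'(u,v,w)$, and Equation~\ref{c3-2} then yields $\sum_{u,v,w\in D_n}H'(u,v,w)\equiv\sum_{u\in D_n}H'(u,u,u)\pmod 3$ for exactly the same reason. Finally, the identification $\sum_{u\in D_n}H'(u,u,u)=|D_n^{P_4}|$ closes the chain, so that $d_{n+3}\equiv|D_n^{P_4}|\pmod 3$ holds for every $n$.

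Next I would specialize to $n=6$, obtaining $d_9\equiv|D_6^{P_4}|\pmod 3$. It then remains only to decide whether the integer $|D_6^{P_4}|$ is divisible by $3$, and here the digit-sum test suffices: the digits of $868329572680304346696$ sum to $105=3\cdot 35$, so $3\mid|D_6^{P_4}|$, whence $d_9\equiv 0\pmod 3$.

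The genuinely hard part is not this final bookkeeping but the production of the value $|D_6^{P_4}|$ itself. The direct array method $M(D_n)^3$ used above for $n\le 5$ breaks down at $n=6$, since $M(D_6)$ is far too large to store explicitly, so one must instead exploit Pawelski's reformulation $|D_{(n+m)}^{P_4}|=|(D_n^{P_4})^{B^m}|$ to carry the enumeration through. Verifying that this computation indeed returns $868329572680304346696$ is the step bearing essentially all of the computational weight; once that value is in hand, the corollary follows immediately from the congruence and the digit-sum check above.
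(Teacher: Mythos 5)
Your proposal is correct and takes essentially the same route as the paper: you chain the same reductions (Equation~\ref{d3-2}, Equation~\ref{c3-1}, Equation~\ref{c3-2}, and the identification of $\sum_{u\in D_n}H'(u,u,u)$ with $|D_n^{P_4}|$) to get $d_{n+3}\equiv|D_n^{P_4}|\pmod 3$, specialize to $n=6$, and rely on the value $|D_6^{P_4}|=868329572680304346696$ computed via Pawelski's reformulation, exactly as the paper does. Your explicit digit-sum check ($105=3\cdot 35$) merely makes the paper's implicit divisibility claim verifiable by hand, and your assessment that the computational weight lies in producing $|D_6^{P_4}|$ matches the paper's presentation.
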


\begin{table}[ht]
\centering
\begin{tabular}[h]{|l|l|c|}
\hline
 $n$ & $|D_n^{P_4}|$ & $|D_n^{P_4}| \bmod 3$\\
\hline
0 & 5 & 2 \\
1 & 15 & 0 \\
2 & 105 & 0\\
3 & 3,490 & 1\\
4 & 2,068,224 & 0 \\
5 & 262,808,891,710 & 1\\
6 & 868,329,572,680,304,346,696 & 0\\

\hline
\end{tabular}
\caption{Known values of $|D_n^{P_4}|$. Note that $d_{n+3}\equiv |D_n^{P_4}| \pmod3$.}
\label{tab:FP3}
\end{table}

One can directly check that $d_{n+3} \equiv |D_n^{P_4}|\pmod 3$ for $n \le 5$.

\section{Counting functions from $B^2$ to $D_n$}\label{sec:b2sec}
By Lemma~\ref{L1}, the poset $D_{n+2}$ is isomorphic to the poset $D_n^{B^2}$---the poset of monotone functions from $B^2=\{00,01,10,11\}$ to $D_n$.
Consider the function $G$ which for every pair $(x,y)\in D_n^2$ returns the value
$G(x,y)=\re(x|y,\top)\cdot \re(\bot,x\&y)$.
Observe that $G(x,y)$ is equal to the number of functions $f\in D_n^{B^2}$ with
$f(01)=x$ and $f(10)=y$. Indeed, we can choose $f(11)$ in $\re(x|y,\top)$ ways
and $f(00)$ in $\re(\bot,x\&y)$ ways. Note that $\re(\bot,x\&y)=\re(x^d|y^d,\top)$, where $x^d$ is the dual of $x$.

For $A\subseteq D_n\times D_n$ let $G(A)$ denote $\sum_{(x,y)\in A}G(x,y)$.
By Lemma~\ref{L1}, we have that
 $$d_{n+2}=G(D_n\times D_n)=\sum_{x\in D_n}\sum_{y\in D_n}G(x,y).$$
Observe that, for every permutation $\pi\in S_n$ and every $x, y\in D_n$, we have
$G(x,y)=G(\pi(x),\pi(y))$.
Indeed, $z\ge x|y$ iff $\pi(z)\ge \pi(x)|\pi(y)$.
Hence, $\re(x|y, \top)=\re(\pi(x)|\pi(y), \top)$. 
\begin{lem}\label{L2}
Let $Y$ be a subset $Y\subseteq D_n$ and suppose that $\pi(Y)=Y$ for every permutation $\pi\in S_n$;
and let $x$ and $y$ be two equivalent, $x\sim y$, elements in $D_n$. Then:
\begin{enumerate}
\item $G(\{x\}\times Y)=G(\{y\}\times Y)$.
\item $G([x]\times Y)=\gamma(x)\cdot G(\{x\}\times Y)$.
\item if $m$ divides $\gamma(x)$, then $m$ divides $G([x]\times Y)$
\item $m$ divides $G(E_{n,m}\times Y)$ and $G(Y\times E_{n,m})$
\end{enumerate}
\end{lem}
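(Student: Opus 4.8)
The plan is to prove the four claims in sequence, each building on the previous. The central observation, established just before the lemma statement, is that $G(x,y) = G(\pi(x),\pi(y))$ for every permutation $\pi \in S_n$. Everything follows from this invariance together with the hypothesis $\pi(Y)=Y$.

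First I would prove item (1). Since $x \sim y$, there is a permutation $\pi$ with $y = \pi(x)$. The idea is to show that $\pi$ induces a bijection on the product set $\{x\}\times Y \to \{y\}\times Y$ that preserves the value of $G$. Concretely, for each $w \in Y$ the invariance gives $G(x,w) = G(\pi(x),\pi(w)) = G(y,\pi(w))$. Because $\pi(Y) = Y$, the map $w \mapsto \pi(w)$ is a bijection of $Y$ onto itself, so summing over $w \in Y$ on the left matches summing over $\pi(w) \in Y$ on the right. Hence
\[
G(\{x\}\times Y) = \sum_{w\in Y} G(x,w) = \sum_{w\in Y} G(y,\pi(w)) = \sum_{w'\in Y} G(y,w') = G(\{y\}\times Y),
\]
which is exactly item (1).

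For item (2), I would decompose $[x]\times Y$ as the disjoint union $\bigcup_{x'\in[x]} \{x'\}\times Y$ over the $\gamma(x)=|[x]|$ elements of the class. By definition of $G(A)$ as a sum, $G([x]\times Y) = \sum_{x'\in[x]} G(\{x'\}\times Y)$. Every $x'\in[x]$ is equivalent to $x$, so item (1) gives $G(\{x'\}\times Y)=G(\{x\}\times Y)$ for each of the $\gamma(x)$ terms; summing yields $\gamma(x)\cdot G(\{x\}\times Y)$. Item (3) is then immediate: if $m \mid \gamma(x)$, then since $G(\{x\}\times Y)$ is a nonnegative integer, $m$ divides the product $\gamma(x)\cdot G(\{x\}\times Y) = G([x]\times Y)$.

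Finally, for item (4) I would partition $E_{n,m}$ into equivalence classes. By the definition $E_{n,m}=\{f\in D_n : \gamma(f)\equiv 0 \pmod m\}$, the set $E_{n,m}$ is a union of entire equivalence classes (membership depends only on $\gamma$, which is constant on a class), say $E_{n,m} = \bigcup_i [x_i]$ over a set of representatives $x_i$. Then $E_{n,m}\times Y = \bigcup_i [x_i]\times Y$ disjointly, so $G(E_{n,m}\times Y) = \sum_i G([x_i]\times Y)$. Each $x_i\in E_{n,m}$ satisfies $m\mid\gamma(x_i)$, so by item (3) each summand is divisible by $m$, hence so is the total. For the symmetric statement $G(Y\times E_{n,m})$, I would note that $G(x,y)=G(y,x)$ by the symmetry of the defining formula $G(x,y)=\re(x|y,\top)\cdot\re(\bot,x\&y)$, which reduces the second case to the first. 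The main point requiring care is verifying that $E_{n,m}$ is indeed a union of equivalence classes, i.e. that equivalent elements have equal $\gamma$-values; this holds because $f\sim g$ implies $[f]=[g]$, so $\gamma(f)=\gamma(g)$, and is the only place where the structure of $E_{n,m}$ (rather than an arbitrary subset) is used.
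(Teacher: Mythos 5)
Your proposal is correct and follows essentially the same route as the paper: item (1) via the invariance $G(x,y)=G(\pi(x),\pi(y))$ together with the bijection $w\mapsto\pi(w)$ on $Y$, and item (4) by writing $E_{n,m}\times Y$ as a disjoint union of sets $[x_i]\times Y$ with $m\mid\gamma(x_i)$. You merely spell out details the paper leaves implicit, namely the proofs of (2)--(3) and the observation that $G(x,y)=G(y,x)$ (since $x|y$ and $x\&y$ are symmetric) handles the $G(Y\times E_{n,m})$ half of (4).
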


\begin{proof}
Notice that condition $\pi(Y)=Y$ implies that $\pi$ is a bijection on $Y$, or in other words, $\pi$ permutes the elements of $Y$.

(1) $G(\{x\}\times Y)=\sum_{s\in Y}G(x,s)=\sum_{s\in Y}G(\pi(x),\pi(s)) = \sum_{t\in \pi(Y)}G(\pi(x),t)= \\ = \sum_{t\in Y}G(\pi(x),t)=G(\{\pi(x)\}\times Y)$. 
We use the fact that $\pi(Y)=Y$.

(4) Observe that $E_{n,m}\times Y$ is a sum of disjoint sets of the form $[x]\times Y$, where $\gamma(x)$ is divisible by $m$.

\end{proof}

\begin{lem}\label{lemn2}
For each $Y=D_n$, $[y]$, $E_{n,m}$, or $E_{n,m}^c$, and each permutation $\pi\in S_n$, we have 
$\pi(Y)=Y$.
\end{lem}
As an immediate corollary, we have that
$$d_{n+2}=
\sum_{x\in R_n}\gamma(x)\cdot G(\{x\}\times D_n).$$

\begin{example}\label{exd2}
Consider the poset $D_2=\{0000,0001,0011,0101,0111,1111\}$. There are five equivalent classes: namely, $R_2=\{ \{0000\},\{0001\},\{0011, 0101\},\{0111\},\{1111\}\}$.
Two elements: $0101$ and $0011$ are equivalent.
For $m=2$, we have 
$E_{2,2}=\{0011,0101\}$
and 
$E^c_{2,2}=\{0000,0001,0111,1111\}.$
Table \ref{tab:xy1} presents values of $G(x,y)$ for $x,y\in D_2$.
Let $Y=[0011]=\{0011,0101\}$.  For every permutation $\pi\in S_2$, we have $\pi(Y)=Y$.
Furthermore, $G(\{0011\}\times Y)=G(\{0101\}\times Y)=9+4=13$ ; and $G([0011]\times Y) = 2 \cdot 13 = 26$, which is divisible by 2.

Similarly, for $Z=[0001]=\{0001\}$, we have that $\pi(Z)=Z$ for every permutation $\pi\in S_2$.
Furthermore, $G(\{0011\}\times Z)=G(\{0101\}\times Z)=6$ ; and $G([0011]\times Z) = 2 \cdot 6 = 12$, which is divisible by 2. By summing up all values in Table~\ref{tab:xy1} we obtain $G(D_2\times D_2)=168=d_4$.
\end{example}

\begin{table}[ht]
\centering
\begin{tabular}[h]{|c|c|c|c c|c|c|}
\hline
 \diagbox{$x$}{$y$} & 0000 & 0001 & 0011 & 0101 & 0111 & 1111 \\
\hline
0000 & 6 & 5 & 3 & 3 & 2 & 1 \\
\hline
0001 & 5 & 10 & 6 & 6 & 4 & 2 \\
\hline
0011 & 3 & 6 & 9 & 4 & 6 & 3\\
0101 & 3 & 6 & 4 & 9 & 6 & 3\\
\hline
0111 & 2 & 4 & 6 & 6 & 10 & 5 \\
\hline
1111 & 1 & 2 & 3 & 3 & 5 & 6\\

\hline
\end{tabular}
\caption{Values of $G(x,y)$ for $x,y\in D_2$.}
\label{tab:xy1}
\end{table}

\begin{thm}\label{T3}
$$d_{n+2}\equiv G(D_n\times D_n)\equiv 
G(E_{n,m}^c\times E_{n,m}^c)\pmod m$$
and 
$$ d_{n+2}\equiv 
\sum_{x\in R_n\cap E_{n,m}^c}\sum_{y\in E_{n,m}^c} \gamma(x)\cdot G(x,y)\pmod m.$$
Here we identify each class $[x]\in R_n$ with 
its canonical representative.
\end{thm}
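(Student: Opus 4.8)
The plan is to build on the equality $d_{n+2}=G(D_n\times D_n)$, which follows from Lemma~\ref{L1}, and to exploit the partition $D_n=E_{n,m}^c\cup E_{n,m}$ together with the divisibility and invariance properties collected in Lemmas~\ref{L2} and~\ref{lemn2}. The whole argument reduces the full double sum to the sub-sum over the ``bad'' elements $E_{n,m}^c$ (those whose orbit size is not divisible by $m$), and then reorganizes that sub-sum by equivalence classes. Note that the first relation $d_{n+2}=G(D_n\times D_n)$ is an \emph{equality}, so only the second $\equiv$ needs work.

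First I would establish the congruence $G(D_n\times D_n)\equiv G(E_{n,m}^c\times E_{n,m}^c)\pmod m$. Since $D_n$ is the disjoint union $E_{n,m}\cup E_{n,m}^c$, the Cartesian square splits into four blocks,
$$D_n\times D_n=(E_{n,m}\times E_{n,m})\cup(E_{n,m}\times E_{n,m}^c)\cup(E_{n,m}^c\times E_{n,m})\cup(E_{n,m}^c\times E_{n,m}^c),$$
and accordingly $G(D_n\times D_n)$ is the sum of $G$ over these four blocks. By Lemma~\ref{lemn2} each of $E_{n,m}$ and $E_{n,m}^c$ satisfies $\pi(Y)=Y$ for every $\pi\in S_n$, so Lemma~\ref{L2}(4) applies to each of the first three blocks (taking $Y=E_{n,m}$ or $Y=E_{n,m}^c$ as appropriate): every block that contains a factor $E_{n,m}$ is divisible by $m$. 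Discarding these three terms modulo $m$ leaves exactly $G(E_{n,m}^c\times E_{n,m}^c)$, which gives the first displayed congruence.

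Next I would rewrite $G(E_{n,m}^c\times E_{n,m}^c)$ as the advertised double sum over representatives. Set $Y=E_{n,m}^c$; since $\gamma$ is constant on equivalence classes, $E_{n,m}^c$ is a union of whole classes, so I can group the first coordinate by orbits as $G(E_{n,m}^c\times Y)=\sum_{x\in R_n\cap E_{n,m}^c}G([x]\times Y)$, where each class is identified with its canonical representative. Because $\pi(Y)=Y$ (Lemma~\ref{lemn2}), Lemma~\ref{L2}(2) gives $G([x]\times Y)=\gamma(x)\cdot G(\{x\}\times Y)$, and expanding $G(\{x\}\times Y)=\sum_{y\in E_{n,m}^c}G(x,y)$ yields
$$G(E_{n,m}^c\times E_{n,m}^c)=\sum_{x\in R_n\cap E_{n,m}^c}\sum_{y\in E_{n,m}^c}\gamma(x)\cdot G(x,y).$$
Combining this equality with the first congruence gives the second claim.

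There is no genuine obstacle here: the substance lives entirely in Lemmas~\ref{L2} and~\ref{lemn2}, and the remainder is essentially bookkeeping. The only points that need care are tracking the invariance hypothesis $\pi(Y)=Y$ every time Lemma~\ref{L2} is invoked, and using the identification of a class $[x]$ with its canonical representative consistently, so that the index set $R_n\cap E_{n,m}^c$ is interpreted correctly throughout.
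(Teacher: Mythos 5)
Your proposal is correct and follows essentially the same route as the paper: both reduce $G(D_n\times D_n)$ to $G(E_{n,m}^c\times E_{n,m}^c)$ by discarding the parts involving $E_{n,m}$ via Lemma~\ref{L2}(4) together with the invariance from Lemma~\ref{lemn2}, and then group the first coordinate by equivalence classes using Lemma~\ref{L2}(2). The only (cosmetic) difference is that you partition $D_n\times D_n$ directly into four blocks, while the paper phrases the same cancellation as inclusion--exclusion over $A_1=E_{n,m}\times D_n$ and $A_2=D_n\times E_{n,m}$; you also rightly observe that the second displayed congruence is in fact an equality at the level of $G(E_{n,m}^c\times E_{n,m}^c)$.
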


\begin{proof}
Let $A_1$ denote $E_{n,m}\times D_n$ 
and $A_2$ denote $D_n\times E_{n,m}$.
Observe that
$$A_1\cap A_2= E_{n,m}\times E_{n,m}$$
$$E_{n,m}^c\times E_{n,m}^c=D_n\times D_n- (A_1\cup A_2)$$
and
$$G(E_{n,m}^c\times E_{n,m}^c)=G(D_n\times D_n)- G(A_1)
-G(A_2)+G(A_1\cap A_2).$$
By Lemma~\ref{L2}(2), we have  that $m$ divides 
$G(A_1)$, $G(A_2)$, and $G(A_1\cap A_2)$.
Hence,
$$d_{n+2}\equiv G(D_n\times D_n)\equiv 
G(E_{n,m}^c\times E_{n,m}^c)\pmod m.$$
Observe that $\pi(E_{n,m}^c)=E_{n,m}^c$, for every permutation $\pi\in S_n$. Hence, by Lemma~\ref{L2}(4):
$$ d_{n+2}\equiv 
\sum_{x\in R_n\cap E_{n,m}^c}\sum_{y\in E_{n,m}^c} \gamma(x)\cdot G(x,y)\pmod m.$$
\end{proof}

\begin{example}[Continuation of Example \ref{exd2}]
  By summing the relevant values listed in Table \ref{tab:xy1}, we obtain 
  $G(E_{2,2}^c\times E_{2,2}^c)=6+5+2+1+5+10+4+2+2+4+10+5+1+2+5+6=70$. By Theorem \ref{T3}, we have $d_4 \equiv 70 \pmod 2$. Indeed, $d_4 = 168$, which is even.
\end{example}

\section{Counting functions from $B^3$ to $D_n$}
In the next two sections, we show that similar techniques can be also applied to functions
in $D_n^{B^3}$ and $D_n^{B^4}$.
In Section \ref{sec:Sec4} we define the function $H$. We have shown that
$$d_{n+3}=H(D_n^3)=\sum_{x\in D_n}\sum_{y\in D_n}
\sum_{z\in D_n}H(x,y,z)$$
Observe that for every permutation $\pi\in S_n$ and every $x, y,z\in D_n$, we have
$H(x,y,z)=H(\pi(x),\pi(y),\pi(z))$.
\begin{lem}\label{L4}
Let $Y$ and $Z$ be subsets $Y,Z\subseteq D_n$ and suppose that $\pi(Y)=Y$ and $\pi(Z)=Z$ for every permutation $\pi\in S_n$; and let $x$ and $y$ be two equivalent, $x\sim y$, elements in $D_n$. Then:
\begin{enumerate}
\item $H(\{x\}\times Y\times Z)=H(\{y\}\times Y\times Z)$.
\item $H([x]\times Y\times Z)=\gamma(x)\cdot H(\{x\}\times Y\times Z)$.
\item if $m$ divides $\gamma(x)$, then $m$ divides $H([x]\times Y\times Z)$
\item $m$ divides $H(E_{n,m}\times Y\times Z)$.
\end{enumerate}
\end{lem}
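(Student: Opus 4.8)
The plan is to mirror the proof of Lemma~\ref{L2} almost verbatim, replacing the two-argument function $G$ by the three-argument function $H$ and using the permutation invariance $H(x,y,z)=H(\pi(x),\pi(y),\pi(z))$ recorded just above the statement. Throughout, the hypotheses $\pi(Y)=Y$ and $\pi(Z)=Z$ guarantee that each $\pi\in S_n$ restricts to a bijection of $Y$ and of $Z$, which is exactly what licenses the reindexing of the summation variables.

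For part (1), since $x\sim y$ I would fix a permutation $\pi\in S_n$ with $y=\pi(x)$ and compute
$$H(\{x\}\times Y\times Z)=\sum_{s\in Y}\sum_{t\in Z}H(x,s,t)=\sum_{s\in Y}\sum_{t\in Z}H(\pi(x),\pi(s),\pi(t)),$$
and then substitute $s'=\pi(s)$, $t'=\pi(t)$ and use $\pi(Y)=Y$, $\pi(Z)=Z$ to rewrite the double sum as $\sum_{s'\in Y}\sum_{t'\in Z}H(\pi(x),s',t')=H(\{y\}\times Y\times Z)$.

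For part (2), I would note that $[x]$ consists of $\gamma(x)$ pairwise equivalent elements, each of which yields the same value $H(\{x\}\times Y\times Z)$ by part (1); summing over the class produces the factor $\gamma(x)$. Part (3) is then immediate: if $m\mid\gamma(x)$, then $m$ divides the product $\gamma(x)\cdot H(\{x\}\times Y\times Z)=H([x]\times Y\times Z)$. Finally, for part (4) I would observe that $E_{n,m}$ is, by definition, a disjoint union of equivalence classes $[x]$ with $m\mid\gamma(x)$, so $E_{n,m}\times Y\times Z$ decomposes into disjoint pieces $[x]\times Y\times Z$, each contributing a multiple of $m$ by part (3); hence $H(E_{n,m}\times Y\times Z)$ is divisible by $m$.

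I do not expect a genuine obstacle, since the argument is a routine three-variable transcription of Lemma~\ref{L2}. The one point that needs care is that in part (1) the \emph{single} permutation $\pi$ realizing $y=\pi(x)$ must simultaneously act as a bijection on both $Y$ and $Z$; this is precisely why both stabilizer hypotheses $\pi(Y)=Y$ and $\pi(Z)=Z$ are assumed, and it is what makes the two reindexings $s'=\pi(s)$ and $t'=\pi(t)$ valid at once.
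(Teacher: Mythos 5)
Your proposal is correct and follows essentially the same route as the paper: part (1) by the reindexing substitution $s'=\pi(s)$, $t'=\pi(t)$ justified by the invariance $H(x,s,t)=H(\pi(x),\pi(s),\pi(t))$ and the hypotheses $\pi(Y)=Y$, $\pi(Z)=Z$, with parts (2)--(4) following from the decomposition of $[x]$ and of $E_{n,m}$ into equivalence classes. The paper in fact writes out only part (1) and leaves (2)--(4) implicit (as it did for Lemma~\ref{L2}), so your fuller treatment, including the observation that the single $\pi$ realizing $y=\pi(x)$ must stabilize both $Y$ and $Z$ simultaneously, is a faithful completion rather than a different argument.
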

\begin{proof}
(1) $H(\{x\}\times Y\times Z)=\sum_{s\in Y}\sum_{t\in Z}H(x,s,t)=\sum_{s\in Y}\sum_{t\in Z}H(\pi(x),\pi(s),\pi(t))=
\sum_{u\in \pi(Y)}\sum_{v\in \pi(Z)}H(\pi(x),u,v)=\sum_{u\in Y}\sum_{\in  Z}H(\pi(x),u,v)=H(\{\pi(x)\}\times Y\times Z)$. 
We use the fact that $\pi$ is a bijection on $Y\times Z$ and permutes the elements of $Y\times Z$.
\end{proof}

As an immediate corollary, we have 
$$d _{n+3}= H(D_n\times D_n\times D_n)=\sum_{x\in R_n}\gamma(x)\cdot H(\{x\}\times D_n\times D_n).$$
\begin{thm}\label{T5}
$$d_{n+3}\equiv H(D_n\times D_n\times D_n)\equiv 
H(E_{n,m}^c\times E_{n,m}^c\times E_{n,m}^c)\pmod m$$
and
$$ d_{n+3}\equiv 
\sum_{x\in R_n\cap E_{n,m}^c}\sum_{y\in E_{n,m}^c}\sum_{z\in E_{n,m}^c} \gamma(x)\cdot
H(x,y,z)\pmod m.$$
Here, again, we identify each class $[x]\in R_n$ with 
its canonical representative.
\end{thm}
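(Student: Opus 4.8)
The plan is to mimic the two-variable argument of Theorem~\ref{T3}, replacing the two-fold inclusion--exclusion by a three-fold one. First I would set $A_1 = E_{n,m}\times D_n\times D_n$, $A_2 = D_n\times E_{n,m}\times D_n$, and $A_3 = D_n\times D_n\times E_{n,m}$, so that a triple $(x,y,z)$ lies in $A_1\cup A_2\cup A_3$ exactly when at least one of its coordinates has $\gamma$-value divisible by $m$. Its complement inside $D_n^3$ is precisely $E_{n,m}^c\times E_{n,m}^c\times E_{n,m}^c$, and since $H$ is additive over disjoint unions the inclusion--exclusion principle gives
$$H(E_{n,m}^c\times E_{n,m}^c\times E_{n,m}^c)=H(D_n^3)-\sum_i H(A_i)+\sum_{i<j}H(A_i\cap A_j)-H(A_1\cap A_2\cap A_3).$$

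Next I would argue that every correction term on the right is divisible by $m$. Each $A_i$, each pairwise intersection $A_i\cap A_j$, and the triple intersection is a Cartesian product of three factors in which at least one factor equals $E_{n,m}$ and the remaining factors are each either $D_n$ or $E_{n,m}$. Since $H$ is invariant under permutation of its arguments, I can always reorder the factors so that $E_{n,m}$ sits in the first slot. By Lemma~\ref{lemn2} the remaining factors $Y,Z$, being $D_n$ or $E_{n,m}$, satisfy $\pi(Y)=Y$ and $\pi(Z)=Z$ for every $\pi\in S_n$, so Lemma~\ref{L4}(4) applies and shows $m\mid H(E_{n,m}\times Y\times Z)$. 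Hence every term except $H(D_n^3)$ and $H((E_{n,m}^c)^3)$ vanishes modulo $m$, which together with the identity $d_{n+3}=H(D_n^3)$ yields the first chain of congruences.

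For the refined summation I would decompose $H((E_{n,m}^c)^3)$ by orbits in the first coordinate. By Lemma~\ref{lemn2} the set $E_{n,m}^c$ is invariant under every $\pi\in S_n$, so Lemma~\ref{L4}(1)--(2) applies with $Y=Z=E_{n,m}^c$: the inner sum $H(\{x\}\times E_{n,m}^c\times E_{n,m}^c)$ depends only on the class $[x]$, and $H([x]\times E_{n,m}^c\times E_{n,m}^c)=\gamma(x)\cdot H(\{x\}\times E_{n,m}^c\times E_{n,m}^c)$. Because $\gamma$ is constant on each class, a class lies entirely inside $E_{n,m}$ or entirely inside $E_{n,m}^c$, so the classes meeting $E_{n,m}^c$ are exactly those indexed by $R_n\cap E_{n,m}^c$. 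Summing over these classes and expanding the inner factor as $\sum_{y,z\in E_{n,m}^c}H(x,y,z)$ gives the stated formula.

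The routine part is the bookkeeping; the only point requiring care is the justification that Lemma~\ref{L4}(4), stated with $E_{n,m}$ in the first argument, also governs the sets $A_2$ and $A_3$ and the intersections in which $E_{n,m}$ appears in a later slot. This is where I would lean explicitly on the full symmetry of $H$ under permutations of its three arguments, together with Lemma~\ref{lemn2} to certify that the remaining factors are permutation-invariant.
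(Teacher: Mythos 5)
Your proposal is correct and follows essentially the same route as the paper's proof: the same sets $A_1,A_2,A_3$, the same three-fold inclusion--exclusion identity, divisibility of every correction term by $m$, and then the orbit decomposition of $H((E_{n,m}^c)^3)$ in the first coordinate via Lemma~\ref{L4}(2) to obtain the refined sum over $R_n\cap E_{n,m}^c$. If anything, you are slightly more careful than the paper, which asserts $m\mid H(A_I)$ for all the intersections without spelling out the reordering argument you give (symmetry of $H$ in its arguments plus the $S_n$-invariance of $D_n$ and $E_{n,m}$ from Lemma~\ref{lemn2}) to reduce each case to the first-slot statement of Lemma~\ref{L4}(4).
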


\begin{proof}
Let $A_1, A_2, A_3$ denote 
$E_{n,m}\times D_n\times D_n$,
$D_n\times E_{n,m}\times D_n$, and
$D_n\times\ D_n\times E_{n,m}$, respectively. 
Observe that
$$A_1\cap A_2= E_{n,m}\times E_{n,m}\times D_n$$
$$A_1\cap A_3= E_{n,m}\times D_n\times E_{n,m}$$
$$A_2\cap A_3= D_n\times E_{n,m}\times E_{n,m}$$
$$A_1\cap A_2\cap A_3= E_{n,m}\times E_{n,m}\times E_{n,m}$$
$$E_{n,m}^c\times E_{n,m}^c\times E_{n,m}^c=
D_n\times D_n\times D_n- \bigcup_{i=1}^3 A_i$$
Similarly as in \cite[Chapter 2.]{martin}, one can prove the following form of the principle of inclusion and exclusion:
$$H(E_{n,m}^c\times E_{n,m}^c\times E_{n,m}^c)=
H(D_n\times D_n\times D_n- \bigcup_{i=1}^3 A_i)$$
$$=H(D_n\times D_n\times D_n)- H(A_1)-H(A_2)-H(A_3)$$
$$ + H(A_1\cap A_2)+ H(A_1\cap A_3)+
H(A_2\cap A_3)-H(A_1\cap A_2\cap A_3).$$
Moreover, we have  that $m$ divides the following numbers:
\begin{itemize}
\item $H(A_i)$ for $i=1,2,3$
\item $H(A_i\cap A_j)$ for $1\le i< j\le 3$
\item $H(A_1\cap A_2\cap A_3)$.
\end{itemize} 
Hence, 
$$d _{n+3}\equiv H(D_n\times D_n\times D_n)\equiv H(E^c_{n,m}\times E^c_{n,m}\times E_{n,m}^c)\pmod m.$$
Observe that $\pi(E_{n,m}^c)=E_{n,m}^c$, for every $\pi\in S_n$, hence by Lemma~\ref{L4}(2), we have 
$$ d_{n+3}\equiv 
\sum_{x\in R_n\cap E_{n,m}^c}\sum_{y\in E_{n,m}^c}\sum_{z\in E_{n,m}^c} \gamma(x)\cdot
H(x,y,z)\pmod m.$$
\end{proof}

\begin{example}
Consider $D_4$. There are 168 elements in $D_4$ and 30 equivalent classes in $R_4$. The distribution of these equivalence classes based on their $\gamma$ value is presented in Table \ref{tab:r4gamma}.
For instance, there are six equivalent classes $[x]$ with $\gamma(x)=1$, two equivalent classes with $\gamma(x)=3$, and so forth. For $m=2$, the set $E^c_{4,2}$ contains only twelve elements
and $R_4\cap E^c_{4,2}$ contains eight elements. Similarly, for $m=3$, the set $E^c_{4,3}$ contains 42 elements and $R_4\cap E^c_{4,3}$ consists of 15 elements.
\end{example}

\begin{example}
We have utilized a Java implementation of the Theorem \ref{T5}. For $n=4$ and $m=2,3,4,6,12$ we have:

$ d_7 \equiv 2320978352 \pmod 2$, therefore $d_7 \bmod 2 = 0$,

$ d_7 \equiv 74128573428 \pmod 3$, therefore $d_7 \bmod 3 = 0$,

$ d_7 \equiv 128268820802 \pmod 4$, therefore $d_7 \bmod 4 = 2$,

$ d_7 \equiv 89637133284 \pmod 6$, therefore $d_7 \bmod 6 = 0$,

$ d_7 \equiv 566167187562 \pmod {12}$, therefore $d_7 \bmod {12} = 6$,

One can check these values directly by dividing $d_7$ by 2, 3, 4, 6, and 12.
\end{example}

\begin{table}[ht]
\centering
\begin{tabular}[h]{| c | c |}
\hline
 $k$ & $|\{f\in R_4:\gamma(f) = k\}|$ \\
\hline
1 & 6 \\
3 & 2 \\
4 & 9 \\
6 & 6 \\
12 & 7 \\
\hline
\end{tabular}
\caption{Number of $f \in R_4$ with $\gamma(f) = k$.}
\label{tab:r4gamma}
\end{table}

\section{Counting functions from $B^4$ into $D_n$}
By Lemma~\ref{L1}, the poset $D_{n+4}$ is isomorphic to the poset $D_n^{B^4}$---the set of monotone functions from $B^4$
to $D_n$.
Consider the function $F$, which for every six elements $a,b,c,d,e,f\in D_{n}$, gets
how many functions $g\in D_n^{B^4}$ satisfy:
$g(0011)=a$,
$g(0101)=b$,
$g(1001)=c$,
$g(0110)=d$,
$g(1010)=e$,
$g(1100)=f$.
Or in other words $F$ counts in how many ways we can choose
the values of $g$ for other elements of $B^4$.
Observe that the values for upper elements:
$(1111)$, $(0111)$, $(1011)$, $(1101)$, $(1110)$ can be
chosen independently from the values for lower elements:
$(0000)$, $(0001)$, $(0010)$, $(0100)$, $(1000)$.

Consider now how many ways one can choose the values
for the upper elements:
$(1111)$, $(0111)$, $(1011)$, $(1101)$, $(1110)$.
First, we choose $g(1111)$ witch must be greater or equal than each of
the values $a,b,c,d,e,f$, so $g(1111)$ can be chosen
from the values greater or equal than $a|b|c|d|e|f$.
Next, if the value of $g(1111)$ is chosen, then the values
$g(0111)$, $g(1011)$, $g(1101)$, $g(1110)$ can be chosen independently
from each other. And
$g(0111)$ must be greater or equal than each one of
$g(0011)=a$,
$g(0101)=b$,
$g(0110)=d$. Hence $a|b|d\le g(0111)\le g(1111)$.
Thus $g(0111)$ can be chosen in $\re(a|b|d, g(1111))$ ways
(remember that $\re(u,v)$ is the number of elements between $u$ and $v$).
Similarly, the value of
$g(1011)$ can be chosen in $\re(a|c|e, g(1111))$ ways;
$g(1101)$ in $\re(b|c|f, g(1111))$ ways; and
$g(1110)$ in $\re(d|e|f, g(1111)]$ ways.

The number of ways $g$ can be extended to the upper elements is equal
to
$$H=\sum_{u\ge a|b|c|d|e|f}
 \re(a|b|d, u)\cdot \re(a|c|e, u)\cdot \re(b|c|f, u)\cdot \re(d|e|f, u).$$
Similarly one can show that
the number of ways $g$ can be extended to the lower elements is equal
to
$$h=\sum_{u\le a\& b\& c\& d\& e\& f}
\re(u,a\& b\& c)\cdot \re(u,a\& d\& e)\cdot \re(u,b\& d\& f)
\cdot \re(u,c\& e\& f).$$
Altogether there are $F(a,b,c,d,e,f)=H\cdot h$ functions satisfying:
$g(0011)=a$,
$g(0101)=b$,
$g(1001)=c$,
$g(0110)=d$,
$g(1010)=e$,
$g(1100)=f$.

For $A\subseteq (D_n)^6$ let $F(A)$ denote $\sum_{(a,b,c,d,e,f)\in A}F(a,b,c,d,e,f)$.
By Lemma~\ref{L1},
 $$d_{n+4}=F(D_n^6)=
\sum_{a\in D_n}\sum_{b\in D_n}\sum_{c\in D_n}
\sum_{d\in D_n}\sum_{e\in D_n}\sum_{f\in D_n}
F(a,b,c,d,e,f)$$

Observe that for every permutation $\pi\in S_n$ and every $a,b,c,d,e,f\in D_n$, we have
$F(a,b,c,d,e,f)=F(\pi(a),\pi(b),\pi(c),\pi(d),\pi(e),\pi(f))$.
Consider Cartesian product $Y=Y_1\times Y_2\times Y_3\times Y_4\times Y_5$ 
and let $\pi(y_1,\ldots,y_5)=(\pi(y_1),\ldots, \pi(y_5))$. Observe that, if $\pi(Y_i)=Y_i$ for every $i$, then $\pi(Y)=Y$ and $\pi$ permutes the elements of $Y$.

\begin{lem}\label{L6}
Let $Y$ be a subset $Y\subseteq D_n^5$ and suppose that $\pi(Y)=Y$ for every permutation $\pi\in S_n$; and let $x$ and $y$ be two equivalent, $x\sim y$, elements in $D_n$. Then:
\begin{enumerate}
\item $F(\{x\}\times Y)=F(\{y\}\times Y)$.
\item $F([x]\times Y)=\gamma(x)\cdot F(\{x\}\times Y)$.
\item If $m$ divides $\gamma(x)$, then $m$ divides $F([x]\times Y)$.
\item $m$ divides $F(E_{n,m}\times Y)$.
\end{enumerate}
\end{lem}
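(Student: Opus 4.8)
The plan is to mirror exactly the arguments already used for $G$ in Lemma~\ref{L2} and for $H$ in Lemma~\ref{L4}, relying on the permutation invariance of $F$ established immediately above, namely that $F(a,b,c,d,e,f)=F(\pi(a),\pi(b),\pi(c),\pi(d),\pi(e),\pi(f))$ for every $\pi\in S_n$. The only structural difference from the earlier lemmas is that the ``tail'' factor $Y$ now lives in $D_n^5$ rather than in $D_n$ or $D_n^2$, so the reindexing in the sum is five-fold; the logic is otherwise unchanged.

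For part (1), I would first use $x\sim y$ to fix a permutation $\pi\in S_n$ with $y=\pi(x)$. Writing a generic element of $Y\subseteq D_n^5$ as a tuple $w=(w_1,\dots,w_5)$ and setting $\pi(w)=(\pi(w_1),\dots,\pi(w_5))$, I would compute
\[
F(\{x\}\times Y)=\sum_{w\in Y}F(x,w)
=\sum_{w\in Y}F(\pi(x),\pi(w))
=\sum_{v\in \pi(Y)}F(\pi(x),v)
=\sum_{v\in Y}F(\pi(x),v)
=F(\{y\}\times Y),
\]
where the second equality is the permutation invariance of $F$, the third is the substitution $v=\pi(w)$, the fourth uses the hypothesis $\pi(Y)=Y$ (so that $\pi$ is a bijection of $Y$ and the reindexing is legitimate), and the last recognizes $\sum_{v\in Y}F(\pi(x),v)=F(\{\pi(x)\}\times Y)$ with $\pi(x)=y$.

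For part (2), I would partition $[x]$ into its $\gamma(x)$ elements; by part (1) each $z\in[x]$ satisfies $F(\{z\}\times Y)=F(\{x\}\times Y)$, so summing over the class gives $F([x]\times Y)=\sum_{z\in[x]}F(\{z\}\times Y)=\gamma(x)\cdot F(\{x\}\times Y)$. Part (3) is then immediate, since $m\mid\gamma(x)$ forces $m$ to divide $\gamma(x)\cdot F(\{x\}\times Y)$. For part (4), I would note that $E_{n,m}$ is a union of full equivalence classes (it is defined by a condition on $\gamma$, which is constant on classes), so $E_{n,m}\times Y$ decomposes as a disjoint union of sets $[x]\times Y$ with $m\mid\gamma(x)$; each summand is divisible by $m$ by part (3), hence so is $F(E_{n,m}\times Y)$.

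I do not expect any genuine obstacle: the content is identical to Lemmas~\ref{L2} and~\ref{L4}, and the only point demanding care is the bookkeeping of the five-fold substitution $v=\pi(w)$ together with the verification that $\pi$ truly permutes $Y$. The latter is supplied by the hypothesis $\pi(Y)=Y$ and, in the typical applications, by the remark preceding the lemma that $\pi(Y_i)=Y_i$ for each factor $Y_i$ implies $\pi(Y)=Y$.
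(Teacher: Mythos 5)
Your proposal is correct and follows essentially the same route as the paper: part (1) via the invariance $F(a,\dots,f)=F(\pi(a),\dots,\pi(f))$ and the reindexing $v=\pi(w)$ justified by $\pi(Y)=Y$, which is exactly the paper's displayed chain of equalities, and parts (2)--(4) by summing part (1) over the class $[x]$ and decomposing $E_{n,m}\times Y$ into disjoint sets $[z]\times Y$ with $m\mid\gamma(z)$, which is the same completion the paper makes explicit in its proof of Lemma~\ref{L2} and leaves implicit here. No gaps; if anything, your write-up is more complete than the paper's, which proves only part (1) for this lemma.
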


\begin{proof}
(1) $F(\{x\}\times Y)=\sum_{s\in Y}F(x,s)=\sum_{s\in Y}F(\pi(x),\pi(s))=
\sum_{u\in \pi(Y)}F(\pi(x),u)=\sum_{u\in Y}F(\pi(x),u)=F(\{\pi(x)\}\times Y)$. 
We use the fact that $\pi$ is a bijection on $Y$ and permutes the elements of $Y$.
\end{proof}

As a corollary we have
$$d _{n+4}= F(D_n^6)=F(D_n\times D_n\times D_n\times D_n\times D_n\times D_n)$$
and
$$d_{n+4}=
\sum_{x\in R_n}\gamma(x)\cdot F(\{x\}\times D_n\times D_n\times D_n \times D_n\times D_n).$$
\begin{thm}\label{T7}
$$d_{n+4}\equiv F(D_n^6)\equiv 
F((E_{n,m}^c)^6)\pmod m$$
and
$$ d_{n+4}\equiv 
\sum_{a\in R_n\cap E_{n,m}^c}\sum_{b\in E_{n,m}^c}\sum_{c\in E_{n,m}^c}
\sum_{d\in E_{n,m}^c}\sum_{e\in E_{n,m}^c}\sum_{f\in E_{n,m}^c}
 \gamma(a)\cdot F(a,b,c,d,e,f)\pmod m.$$
\end{thm}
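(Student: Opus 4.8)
The plan is to imitate the proofs of Theorems~\ref{T3} and~\ref{T5}, replacing the two (respectively three) coordinate directions by the six coordinates of $F$. First I would introduce the six sets
$$A_i = D_n \times \cdots \times E_{n,m} \times \cdots \times D_n \qquad (1 \le i \le 6),$$
where the single factor $E_{n,m}$ occupies the $i$-th position and every other factor is $D_n$. With this notation one has
$$(E_{n,m}^c)^6 = D_n^6 - \bigcup_{i=1}^6 A_i,$$
so the additive set function $F$ can be expanded by the principle of inclusion and exclusion exactly as in Theorem~\ref{T5}, now over all $2^6$ subsets of $\{1,\ldots,6\}$ (cf.\ \cite[Chapter 2.]{martin}):
$$F\bigl((E_{n,m}^c)^6\bigr) = \sum_{J \subseteq \{1,\ldots,6\}} (-1)^{|J|}\, F\Bigl(\bigcap_{i \in J} A_i\Bigr),$$
with the convention $\bigcap_{i\in\emptyset}A_i = D_n^6$, so that the term $J=\emptyset$ contributes $F(D_n^6) = d_{n+4}$.

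The heart of the argument is to show that every intersection $F\bigl(\bigcap_{i\in J}A_i\bigr)$ with $J \ne \emptyset$ is divisible by $m$. Fix any index $i_0 \in J$. Then $\bigcap_{i\in J} A_i$ is a Cartesian product whose $i_0$-th factor is $E_{n,m}$ and whose five remaining factors are each either $D_n$ or $E_{n,m}$. By Lemma~\ref{lemn2} each such factor is fixed by every $\pi \in S_n$, hence the product $Y$ of the five non-$i_0$ factors satisfies $\pi(Y)=Y$. Here I expect the only point needing care: Lemma~\ref{L6}(4) is stated with $E_{n,m}$ in the \emph{first} coordinate, whereas now it may sit in the $i_0$-th one. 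However, the proof of parts (1)--(2) of Lemma~\ref{L6} uses only that $F$ is invariant under the diagonal action $\pi(a,\ldots,f)=(\pi(a),\ldots,\pi(f))$ and that $\pi$ permutes each factor set; both facts are symmetric in the six positions, so the same computation gives $F(\cdots \times [x] \times \cdots)=\gamma(x)\cdot F(\cdots \times \{x\} \times \cdots)$ with $[x]$ in position $i_0$. Decomposing the $i_0$-th factor $E_{n,m}$ into its equivalence classes $[x]$, each of which has $m \mid \gamma(x)$, then shows $m \mid F\bigl(\bigcap_{i\in J}A_i\bigr)$.

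Consequently every $J \ne \emptyset$ term vanishes modulo $m$, leaving
$$F\bigl((E_{n,m}^c)^6\bigr) \equiv F(D_n^6) = d_{n+4} \pmod m,$$
which is the first assertion. For the second, I would note that $E_{n,m}^c = D_n - E_{n,m}$ is itself fixed by every $\pi \in S_n$ (again Lemma~\ref{lemn2}), so $(E_{n,m}^c)^5$ is a $\pi$-invariant set and Lemma~\ref{L6}(2) applies to the first coordinate. Grouping that first factor $E_{n,m}^c$ by equivalence classes and replacing each class $[a]\subseteq E_{n,m}^c$ by its canonical representative $a \in R_n \cap E_{n,m}^c$ with multiplicity $\gamma(a)$ yields
$$d_{n+4} \equiv \sum_{a \in R_n \cap E_{n,m}^c} \sum_{b,c,d,e,f \in E_{n,m}^c} \gamma(a)\, F(a,b,c,d,e,f) \pmod m,$$
completing the argument. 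The main obstacle is essentially bookkeeping: organizing the $2^6$ inclusion--exclusion terms and confirming that the single-coordinate divisibility of Lemma~\ref{L6} is genuinely independent of which position carries $E_{n,m}$.
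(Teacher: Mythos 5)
Your proposal is correct and follows essentially the same route as the paper: the same sets $A_i = D_n^{i-1}\times E_{n,m}\times D_n^{6-i}$, the same inclusion--exclusion over all subsets of $\{1,\ldots,6\}$, divisibility of $F(A_I)$ for $I\ne\emptyset$ via decomposition of an $E_{n,m}$ factor into equivalence classes, and the same use of Lemma~\ref{L6}(2) on the first coordinate for the second congruence. In fact you are slightly more careful than the paper, which asserts $m \mid F(A_I)$ without comment, whereas you explicitly justify that the divisibility argument of Lemma~\ref{L6} is independent of which coordinate carries $E_{n,m}$.
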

\begin{proof}
Let $A_1$ denote $E_{n,m}\times D_n^5$;
let $A_2$ denote $D_n\times E_{n,m}\times D_n^4$; and so on.
More precisely, for $i$, $1\le i\le 6$, let $A_i$ denote $D_n^{i-1}\times E_{n,m}\times D_n^{6-i}$.
For any nonempty subset $I\subseteq\{1,\ldots,6\}$, let $A_I=\bigcap_{i\in I}A_i$.
Additionally, let $A_\emptyset=D_n^6$.
Observe that
$$(E_{n,m}^c)^6=
D_n^6- \bigcup_{i=1}^6 A_i.$$
Similarly as in \cite[Chapter 2.]{martin}, one can prove the following form of the principle of inclusion and exclusion:
$$F((E_{n,m}^c)^6)=
F(D_n^6- \bigcup_{i=1}^6 A_i)=\sum_{I\subseteq\{1,\ldots,6\}}(-1)^{|I|}F(A_I).$$
Moreover, if $I\ne\emptyset$, then $m$ divides $F(A_I)$.
Hence, 
$$d_{n+4}\equiv F(D_n^6)\equiv F((E^c_{n,m})^6)\pmod m.$$
Observe that $\pi(E_{n,m}^c)=E_{n,m}^c$, for every $\pi\in S_n$, hence by Lemma~\ref{L6}(2), we have 
$$ d_{n+4}\equiv 
\sum_{a\in R_n\cap E_{n,m}^c}\sum_{b\in E_{n,m}^c}\sum_{c\in E_{n,m}^c}
\sum_{d\in E_{n,m}^c}\sum_{e\in E_{n,m}^c}\sum_{f\in E_{n,m}^c}
 \gamma(a)\cdot F(a,b,c,d,e,f)\pmod m.$$
\end{proof}

 \begin{example}
We utilized a Java implementation of the Theorem \ref{T7}. For $n=4$ and $m=2,3,4,6,12$ we have:

$ d_8 \equiv 53336702474849828 
$, therefore $d_8 \bmod 2 = 0$,

$ d_8 \equiv 3019662424037271148 \pmod 3$, therefore $d_8 \bmod 3 = 1$,

$ d_8 \equiv 25754060568741983624 \pmod 4$, therefore $d_8 \bmod 4 = 0$,

$ d_8 \equiv 14729824485525634108 \pmod 6$, therefore $d_8 \bmod 6 = 4$,

$ d_8 \equiv 15054599294580333880 \pmod {12}$, therefore $d_8 \bmod {12} = 4$,

One can check these values directly by dividing $d_8$ by 2, 3, 4, 6, and 12.

\end{example}

\section{Application}\label{application}
To compute remainders of $d_9$ divided by 5 and 7, we chose the algorithm described in Section \ref{sec:b2sec}.
Our implementation lists all 490,013,148 elements of $R_7$ and calculates $\gamma(x)$ and $\re(\bot,x)$ for each element $x \in R_7$. This feat was previously accomplished only by Van Hirtum in 2021 \cite{hirtum}. It is worth noting that the number of elements $x$ in $R_n$ with $\gamma(x) = n!$ for $n > 1$ can be found in the OEIS sequence \seqnum{A220879} (see Table \ref{tab:a220879}). Using the available precalculated sets, we can efficiently determine the $7$th term of the sequence, which is not recorded in the OEIS.

\begin{table}[ht]
\centering
\begin{tabular}[h]{|p{0.25cm}|p{2.2cm}|}
\hline
 $n$ & $\seqnum{A220879} \hspace{0.5mm}$$(n)$\\
\hline
1 & 0 \\
2 & 1 \\
3 & 0 \\
4 & 0 \\
5 & 7 \\
6 & 7281 \\
7 & 468822749 \\
\hline
\end{tabular}
\caption{Inequivalent monotone Boolean functions of $n$ variables with no symmetries.}
\label{tab:a220879}
\end{table}

\begin{table}[ht]
\centering
\begin{tabular}[h]{| c | c |}
\hline
 $k$ & $|\{f\in R_7:\gamma(f) = k\}|$ \\
\hline
1 & 9 \\
7 & 27 \\
21 & 75 \\
30 & 5 \\
35 & 117 \\
42 & 99 \\
70 & 90 \\
84 & 9 \\
105 & 1206 \\
120 & 4 \\
140 & 702 \\
210 & 3255 \\
252 & 114 \\
315 & 2742 \\
360 & 18 \\
420 & 26739 \\
504 & 237 \\
630 & 47242 \\
720 & 4 \\
840 & 75024 \\
1260 & 1024050 \\
1680 & 3128 \\
2520 & 20005503 \\
5040 & 468822749 \\
\hline
\end{tabular}
\caption{Number of $f \in R_7$ with the given $\gamma(f)$.}\label{tab:fr7}
\end{table}

Our program's most critical part, the Boolean function canonization procedure, is based on fast Van Hirtum's approach \cite[Section 5.2.9]{hirtum} and implemented in Rust. Our program is running on a 32-thread machine with Xeon cores.

After the preprocessed data has been loaded into the main memory, the test was performed and the known value of $d_8$ was successfully reproduced in just 16 seconds. However, using this method to check the divisibility of $d_9$ for any value of $m$ is significantly more challenging.

In order to determine which remainders can be computed by our methods, we can use the information in Table \ref{tab:fr7}.
Note that

$$E_{7,m}^c = \sum_{\substack{x \in R_{7} \\ \gamma(x) \bmod m \neq 0}} \gamma(x).$$
The four smallest $E_{7,m}^c$ are $E_{7,7}^c$ with 9999 elements, $E_{7,3}^c$ with 108873 elements, $E_{7,21}^c$ with 118863 elements, and $E_{7,5}^c$ with 154863 elements. Since $d_9$ is already known to be divisible by 3, the next step is to compute remainders of $d_9$ divided by 5 and 7.

\subsection{Remainder of $d_9$ divided by 5}

$$\sum_{x\in R_7\cap E_{7,5}^c}\sum_{y\in E_{7,5}^c} \gamma(x)\cdot G(x,y) = 1404812111893131438640857806,$$
therefore by Theorem \ref{T3}, we have $d_9 \bmod 5 = 1$. We calculated this number in approximately 7 hours.
Moreover, using Theorem \ref{T7} we have $d_9 \equiv 157853570524864492086 \pmod 5$, which confirms that $d_9 \bmod 5 = 1$.

\subsection{Remainder of $d_9$ divided by 7}

$$\sum_{x\in R_7\cap E_{7,7}^c}\sum_{y\in E_{7,7}^c} \gamma(x)\cdot G(x,y) = 299895177645066825375626,$$
therefore by Theorem \ref{T3}, we have $d_9 \bmod 7 = 6$. We calculated this number in approximately half an hour.

  \bigskip
  \hrule
  \bigskip
  
  \noindent 
  2010 \emph{Mathematics Subject Classification}:~Primary 06E30.
  
  \medskip
  
  \noindent 
  \emph{Keywords}:~Dedekind numbers,
  Monotone Boolean functions,
  Ninth Dedekind number,
  Congruence modulo,
  Chinese remainder theorem
  
  \bigskip
  \hrule
  \bigskip
  
  \noindent 
  (Concerned with sequences \seqnum{A000372}, \seqnum{A001206}, \seqnum{A003182} and \seqnum{A220879}.)
  
  \bigskip

\end{document}